\numberwithin{equation}{section}
\newtheorem{theorem}{Theorem}[section]
\newtheorem{lemma}[theorem]{Lemma}
\newtheorem{proposition}[theorem]{Proposition}
\newtheorem{corollary}[theorem]{Corollary}
\theoremstyle{definition}
\newtheorem{definition}[theorem]{Definition}
\newtheorem{example}[theorem]{Example}
\theoremstyle{remark}
\newtheorem{remark}[theorem]{Remark}
\newtheorem{problem}[theorem]{Problem}
\newcommand\Supp{\operatorname{Supp}}
\newcommand\Ass{\operatorname{Ass}}
\newcommand\Ann{\operatorname{Ann}}
\newcommand\height{\operatorname{height}}
\newcommand\grade{\operatorname{grade}}
\newcommand\Spec{\operatorname{Spec}}
\begin{document}
\title[$I$-Cohen Macaulay modules]{$I$-Cohen Macaulay modules}%
\author[W. Mahmood and M. Azam]{Waqas Mahmood and Maria Azam}
\address{\textbf{Waqas Mahmood}\\Department of Mathematics, Quaid-I-Azam University Islamabad, Pakistan.}%
\email{waqassms@gmail.com or wmahmood$@$qau.edu.pk}
\address{\textbf{Maria Azam}\\Department of Mathematics, Quaid-I-Azam University Islamabad, Pakistan.}%
\email{maria.azam555@gmail.com}
\subjclass[2000]{13C14, 13H10.}
\keywords{Cohen Macaulay modules, $I$-Cohen Macaulay modules}%

\maketitle
\begin{abstract}
A finitely generated module $M$ over a commutative Noetherian ring $R$ is called an $I$-Cohen Macaulay module, if
\[
\grade(I,M) + \dim(M/IM)= \dim(M),
\]
where $I$ is a proper ideal of $R$. The aim of this paper is to study the structure of this class of modules. It is discovered that $I$-Cohen Macaulay modules enjoy many interesting properties which are analogous to those of Cohen Macaulay modules. Also, various characterizations of $I$-Cohen Macaulay modules are presented here.
\end{abstract}

\section{Introduction}
The notions of Cohen Macaulay modules, as well as almost Cohen Macaulay modules are well established in commutative algebra. This paper is devoted to introduce and study the concept of $I$-Cohen Macaulay modules.

In the structure of $I$-Cohen Macaulay modules, we confine our attention entirely to arbitrary Noetherian ring which is not necessarily local. It is really interesting to establish the theory of $I$-Cohen Macaulay modules.  As might be expected, much of the motivation for our work comes from rapidly developing theory of commutative algebra.

A finitely generated module $M$ over a Noetherian ring $R$ is called an almost Cohen Macaulay module, if
\[
\grade(\mathfrak{p},M) = \grade(\mathfrak{p}R_\mathfrak{p}, M_\mathfrak{p}),
\]
for every  $\mathfrak{p}\in \Supp(M)$. Basically a flaw (If $S$ is commutative Noetherian ring and $M$ any finitely generated $S$-module, then $\grade(\mathfrak{p},M)=\grade(\mathfrak{p}R_\mathfrak{p},M_\mathfrak{p})\hspace{2mm}$ for every $\mathfrak{p}\in \Supp(M)$ ) \cite[15.C, page 97]{m} was corrected in \cite[Exercise 16.5]{m1}.

This led to the study of almost Cohen Macaulay rings and modules, first studied by Y. Han in 1998 (he referred it as $D$-ring) and M. Kang in 2001 \cite{k}. Some fascinating examples are given in \cite{k1} by Kang. Later, this class of modules is studied by several authors (see \cite{tang}, \cite{c}, \cite{c1}, \cite{mafi}, \cite{mafi1} and \cite{mafi2}.)

Ionescu discovered that how an almost Cohen-Macaulay module behaves when tensoring by a flat module. In this paper, this phenomena for $I$-Cohen Macaulay modules is discussed.

Many properties of $I$-Cohen Macaulay modules including the results about perseverance of $I$-Cohen Macaulayness in polynomial ring, formal power series ring and completion of the ring are given.

As for every proper ideal $I$ of $R$, a Cohen Macaulay module is $I$-Cohen Macaulay, it was mysterious (at least to the authors), that under what conditions an $I$-Cohen Macaulay module would be a Cohen Macaulay module? In this paper, a characterization regarding this issue is also given.

Throughout this paper, $R$ will be denoting a commutative Noetherian ring and $M$ a non-zero finitely generated $R$-module, unless otherwise specified.
Besides above mentioned characterizations some other results of this note are following.
\section{Main results}
A description of results regarding dimension and grade of modules is being given. For detailed study, see \cite{h}, \cite{m} and \cite{m1}.

Let $R$ be a local ring and $I$ a proper ideal of $R$. Then,
\[
r+\dim(M/(x_{1}, x_{2},...,  x_{r})M)\geq\dim(M).
\]
where $x_{1}, x_{2},...,  x_{r}\in I$.
 If $x_{1}, x_{2},...,  x_{r}\in I$ is a maximal $M$-regular sequence,
then,
\begin{equation}\label{e4}
\grade(I,M)+\dim(M/IM)\leq r+\dim(M/(x_{1}, x_{2},...,  x_{r})M)=\dim(M).
\end{equation}
Moreover, if $M=R$, then,
\[
\grade(I,R)+\dim(R/I)\leq \height(I)+\dim(R/I)\leq \dim R.
\]

\begin{definition}\label{d}
Let $R$ be a ring and $I$ a proper ideal of $R$. Then an $R$-module $M$ is said to be $I$-Cohen Macaulay, if the following equality holds:
\[
\grade(I,M) + \dim_R(M/IM)= \dim_R(M)
\]
\end{definition}

\begin{remark}\label{r} If $I$ is a proper ideal of a local ring $R$. Then $M$ is $I$-Cohen Macaulay for each of the following cases:
\begin{itemize}
\item [(1)] If $I$ is generated by an $M$-regular sequence, see Equation (\ref{e4}).

\item [(2)]  $M$ is Cohen Macaulay, see \cite[Theorem 2.1.2]{h}.
\end{itemize}
\end{remark}

Remark \ref{r} will not hold, provided that $R$ is not local. Moreover, it is shown that there is no relation between almost Cohen Macaulay module and $I$-Cohen Macaulay module, even if $I$ is a prime ideal.

\begin{example}\label{ex}
$(1)$ Let $R=A[x,y]$  where, $A=k[|U,V|]_{<U>}$  and $k$ a field. Note that $A$ is a DVR and $R$ a non-local Cohen Macaulay ring with $\dim R=3$. Consider the ideal $I=<Ux-1>$ of $R$. Then $\grade(I,R)=\height (I)=1$. There exists the following isomorphism
\[
R/I=\frac{A[y][x]}{<Ux-1>}\cong A_U[y], 
\]
Now $A$ is a DVR, so $A_U\cong k(|U,V|)$. Hence, $A_U[y]$ is a one dimensional P.I.D. Then $I$ is a prime ideal and $\dim R/I=\dim A_U[y]=1$. So,
\[
\grade(I,R)+\dim(R/I)=\height (I)+\dim R/I<\dim R.
\]
This proves that $R$ is not $I$-Cohen Macaulay.

$(2)$ Let $k$ be a field, $A=k[x_1,x_2,x_3,y_1,y_2,y_3]$  and $R=A/J$, where $J=<x_1,x_2,x_3>\cap <y_1,y_2,y_3>$. By \cite[Example, p-4]{k1}, $S$ is not an almost Cohen Macaulay ring. Let $I=<x_1,x_2,x_3>R$. Note that $ \Ass_R(R)=\{I, <y_1,y_2,y_3>\}$.
Hence, $\grade(I,R)=0$ and $\dim R/I=3.$ So,
\[
\grade(I,R)+\dim(R/I)= \dim R.
\]
\end{example}

\begin{remark}\label{r1}
$(1)$ If $R$ is $I$-Cohen Macaulay, then $\grade(I,R)= \height(I)$, but converse is not true, see Example \ref{ex}$(1)$,

$(2)$ If $R$ is local, $M$ an $I$-Cohen Macaulay module and $x_{1}, x _{2}, \cdots, x_{r}$ a maximal $M$-regular sequence in $I$. Form Equation (\ref{e4}), it follows that
\[
\dim(M/IM)= \dim(M/(x_{1}, x _{2}, \cdots x_{r})M).
\]
\end{remark}

Let $I$ be a non-zero proper ideal in $R$ and $M$ an $R$-module. In the next result, it will be shown that regular sequences are persevered under the following natural onto ring and module homomorphisms resp.:
\[
\phi_1:R\to R/\Ann_R(I)  \text{ and } \phi_2:M\to M/\Ann_R(I)M.
\]

\begin{proposition}\label{3}
With the previous notion, assume that $r_1,r_2,\dots, r_n\in I$ is an $M$-sequence such that $\phi_1({r_i})\notin \ker(\phi_1)$, for all $i=1,\dots, n$ (e.g. ${r_1},{r_2},\dots, {r_n}$ is $R$-regular). Then $\phi_1({r_1}),\phi_1({r_2}),\dots, \phi_1({r_n})\in \phi_1(I)$ is an $\phi_2(M)$-sequence.
\end{proposition}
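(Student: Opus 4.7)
The plan is to reduce the claim to the single algebraic fact that each $r_j$ annihilates $\mathfrak{a}M$, where $\mathfrak{a}:=\Ann_R(I)$; once this is in hand, a short two-step manipulation delivers the non-zero-divisor property on the required successive quotients. The key observation is that since $r_j\in I$ and $\mathfrak{a}\cdot I=0$ by definition of $\mathfrak{a}$, one has $r_j\cdot\mathfrak{a}=0$ in $R$, so $r_j\cdot a=0$ for every $a\in\mathfrak{a}M$.

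First I would unwind the statement: writing $N_{i-1}:=(r_1,\ldots,r_{i-1})M+\mathfrak{a}M$, there is a canonical isomorphism
$$\phi_2(M)\big/\bigl(\phi_1(r_1),\ldots,\phi_1(r_{i-1})\bigr)\phi_2(M) \;\cong\; M/N_{i-1},$$
under which the action of $\phi_1(r_i)$ corresponds to multiplication by $r_i$. So the task reduces to showing that $r_im\in N_{i-1}$ forces $m\in N_{i-1}$. Writing $r_im=\sum_{j<i}r_jm_j+a$ with $a\in\mathfrak{a}M$ and multiplying by $r_i$, the key identity $r_ia=0$ yields
$$r_i^2 m \;=\; \sum_{j<i} r_j\,(r_i m_j) \;\in\; (r_1,\ldots,r_{i-1})M.$$
Because $r_1,\ldots,r_n$ is $M$-regular, $r_i$ is a non-zero-divisor on $M/(r_1,\ldots,r_{i-1})M$; applying this fact twice gives first $r_im\in (r_1,\ldots,r_{i-1})M$ and then $m\in (r_1,\ldots,r_{i-1})M\subseteq N_{i-1}$, as required.

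The containment $\phi_1(r_i)\in\phi_1(I)$ is tautological, and the extra hypothesis $\phi_1(r_i)\notin\ker\phi_1$ is present only to guarantee $\phi_1(r_i)\ne 0$, ruling out the trivial degeneracy in which the candidate sequence would contain a vanishing entry. The main obstacle is nothing more than the multiplication-by-$r_i$ trick, which exploits $I\cdot\Ann_R(I)=0$ to trade an equation modulo the larger submodule $N_{i-1}$ for one modulo the smaller $(r_1,\ldots,r_{i-1})M$ where the regularity of the original sequence can be applied directly; after that step the verification is pure bookkeeping.
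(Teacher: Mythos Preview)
Your proof is correct and follows essentially the same route as the paper: both multiply through by $r_i$ to exploit $r_i\cdot\Ann_R(I)M=0$, thereby reducing an equation modulo $(r_1,\ldots,r_{i-1})M+\mathfrak{a}M$ to one modulo $(r_1,\ldots,r_{i-1})M$, where the original regularity applies twice. The only difference is presentational---the paper writes out the case $n=1$ explicitly and then appeals to induction, whereas you carry out the general $i$-th step directly.
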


\begin{proof}
Let $\overline{R}=\phi_1(R)$ and $\overline{M}=\phi_2(M)$.  For $n=1$, suppose that $r\in I$ is $M$-regular. Let $\overline{r}=r+\Ann_R(I)$ and $\overline{x}=x+\Ann_R(I)M\in \overline{M}$ such that
\[
\overline{r}\cdot\overline{x}=\overline{0} \text{ and } x\notin \Ann_R(I)M.
\]
It implies that $rx=\sum_{i=1}^s b_iy_i$ with $b_i\in \Ann_R(I)$ and $y_i\in M$ for all $i=1,\dots s$. Then
\[
r(Ix)=\sum_{i=1}^s (b_iI)y_i=0
\]
In particular, $r(rx)=0$. By regularity of $r$ on $M$, $x=0$ a contradiction. So, $\overline{r}\in \phi_1(I)$ is $\overline{M}$-regular.
Hence, the result follows by induction.
\end{proof}

\begin{corollary}\label{4}
Assume that $I$ is a non-zero proper ideal in a local ring $R$ and $\Ann_R(I)\subseteq I$. For, any $R$-module $M$, the following conditions are equivalent:
\begin{itemize}
  \item [(1)] $M$ is an $I$-Cohen Macaulay module.
  \item [(2)] $\phi_2(M)$ is $I\phi_2(M)$-Cohen Macaulay as a $\phi_1(R)$-module, $\grade(I,M)=\grade(\phi_1(I),\phi_2(M))$ and $\dim M=\dim \phi_2(M)$.
\end{itemize}
\end{corollary}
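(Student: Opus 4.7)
The plan is to exploit the hypothesis $\Ann_R(I)\subseteq I$ to (i) identify $M/IM$ with $\overline{M}/\overline{I}\,\overline{M}$ (where $\overline{R}=\phi_1(R)$, $\overline{M}=\phi_2(M)$, $\overline{I}=\phi_1(I)$), so that the ``$\dim(\cdot/I\cdot)$'' term in the $I$-Cohen Macaulay equation is the same on both sides of the equivalence, and (ii) activate Proposition~\ref{3}. I would first record the easy preliminaries: $\overline{R}$ is local, $\overline{I}$ is proper in $\overline{R}$ (since $I+\Ann_R(I)=I\neq R$), $\overline{M}\neq 0$ by Nakayama, and $\overline{M}/\overline{I}\,\overline{M}=M/IM$ because $\Ann_R(I)M\subseteq IM$. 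With these in hand the implication $(2)\Rightarrow(1)$ is a one-line substitution:
\[
\grade(I,M)+\dim(M/IM)=\grade(\overline{I},\overline{M})+\dim(\overline{M}/\overline{I}\,\overline{M})=\dim\overline{M}=\dim M.
\]

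For $(1)\Rightarrow(2)$ I would sandwich the two grades. Applying Proposition~\ref{3} to a maximal $M$-sequence in $I$ gives $\grade(I,M)\leq\grade(\overline{I},\overline{M})$. In the reverse direction, applying the general bound from Equation~(\ref{e4}) in the local ring $\overline{R}$ to the module $\overline{M}$ and the ideal $\overline{I}$ yields
\[
\grade(\overline{I},\overline{M})+\dim(M/IM)\leq\dim\overline{M}\leq\dim M=\grade(I,M)+\dim(M/IM),
\]
where the last equality is the $I$-Cohen Macaulay hypothesis and the middle inequality uses that $\overline{M}$ is a quotient of $M$. This supplies the opposite inequality on the grades, so the two grades coincide; forcing every inequality in the display to become an equality then simultaneously delivers $\dim M=\dim\overline{M}$ and the $\overline{I}$-Cohen Macaulayness of $\overline{M}$.

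The main obstacle is checking that Proposition~\ref{3} is actually applicable in the first step, i.e., that no term of an $M$-regular sequence inside $I$ can lie in $\Ann_R(I)$. I would argue this by contradiction: if $r_i\in\Ann_R(I)$ were regular on $M_{i-1}:=M/(r_1,\dots,r_{i-1})M$, then $r_iI=0$ would give $r_i\cdot(IM_{i-1})=0$, so by regularity of $r_i$ we would have $IM_{i-1}=0$; but then $r_i\in I$ would annihilate the whole of $M_{i-1}$, forcing $M_{i-1}=0$, and Nakayama (applicable since $R$ is local and all $r_j\in I\subseteq\mathfrak{m}$) would then collapse $M$ to $0$, contradicting the blanket assumption $M\neq 0$.
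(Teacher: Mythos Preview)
Your proof is correct and follows essentially the same route as the paper: identify $\overline{M}/\overline{I}\,\overline{M}\cong M/IM$, use Proposition~\ref{3} for $\grade(I,M)\le\grade(\overline{I},\overline{M})$, and squeeze via Equation~(\ref{e4}) together with $\dim\overline{M}\le\dim M$. Your extra paragraph verifying that no term of an $M$-regular sequence in $I$ can lie in $\Ann_R(I)$ is a genuine improvement, since the paper invokes Proposition~\ref{3} without checking its hypothesis.
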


\begin{proof}
Let $M$ be an $I$-Cohen Macaulay module. Since
\[
\phi_2(M)/I\phi_2(M)\cong M/IM
\]
By Theorem \ref{3}, $\grade(I,M)\leq \grade(\phi_1(I),\phi_2(M))$. It implies that
\[
\begin{aligned}
\grade(\phi_1(I),\phi_2(M))
&\geq \dim(M) - \dim(M/I)\\
&\geq \dim(\phi_2(M))-\dim(\phi_2(M)/I\phi_2(M))\\
&\geq \grade(\phi_1(I),\phi_2(M)), \text{ see Equation (\ref{e4})}.
\end{aligned}
\]
This proves all the statements in $(2)$. The converse is obvious.
\end{proof}

If $f:(R,\mathfrak{m})\to (S,\mathfrak{n})$ is a local ring map of Noetherian local rings. Suppose $N$ is an $R$-flat finitely generated $S$-module. In
\cite[Theorem 2.1.7]{h} and  \cite[Proposition 2.2]{c}, the authors investigated behavior of Cohen macaulay and almost Cohen Macaulay modules upon tensoring $f$ with $N$. Next result discuses the behavior of $I$-Cohen Macaulay modules, when $f$ is tensoring with $N$.
\begin{theorem}\label{6}
With the above notion, assume that $I$ is any proper ideal of $R$. Let $M$ be an $R$-module and $M\otimes_R N\neq I(M\otimes_R N)$. Then following conditions are equivalent:
\begin{itemize}
  \item [(1)] $M$ is $I$-Cohen Macaulay and $\dim_S N/\mathfrak{m}N=0$.
  \item [(2)] $M\otimes_R N$ is $IS$-Cohen Macaulay, $\grade(I,M)=\grade(IS,M\otimes_R N)$ and
  \[
  \dim_R M/IM=\dim_S (M\otimes_R N)/I(M\otimes_R N).
  \]
  \item [(3)] $M\otimes_R N$ is $IS$-Cohen Macaulay, $\grade(I,M)=\grade(IS,M\otimes_R N)$ and
  \[
  \dim_R M=\dim_S (M\otimes_R N).
  \]
\end{itemize}
\end{theorem}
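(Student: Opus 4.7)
The plan is to reduce the three-way equivalence to two standard flat base-change facts plus a single defect identity. Writing $d := \dim_S(N/\mathfrak{m}N)$ for the fibre dimension, the argument runs once the grade preservation
\[
\grade(I,M) = \grade(IS,\, M\otimes_R N)
\]
and the fibre-dimension formulas
\[
\dim_S(M\otimes_R N) = \dim_R M + d, \qquad \dim_S\bigl((M\otimes_R N)/I(M\otimes_R N)\bigr) = \dim_R(M/IM) + d
\]
are in place. The grade identity uses that $R$-flatness of $N$ preserves every $M$-regular sequence in $I$, while the non-triviality hypothesis $M\otimes_R N \neq I(M\otimes_R N)$ ensures that a maximal $M$-regular sequence in $I$ remains maximal after tensoring. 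The dimension identities are instances of the classical fibre-dimension formula for flat local maps, applied first to $N$ (flat over $R$, finitely generated over $S$) and then with $M/IM$ in place of $M$, using the natural isomorphism $(M/IM)\otimes_R N \cong (M\otimes_R N)/I(M\otimes_R N)$.

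Subtracting the two dimension identities and applying the grade identity yields the defect identity
\[
\grade(IS,M\otimes_R N) + \dim_S\bigl((M\otimes_R N)/I(M\otimes_R N)\bigr) - \dim_S(M\otimes_R N) = \grade(I,M) + \dim_R(M/IM) - \dim_R M,
\]
so the $IS$-Cohen Macaulay defect of $M\otimes_R N$ coincides with the $I$-Cohen Macaulay defect of $M$ regardless of the value of $d$. From this the three-way equivalence is straightforward bookkeeping. For (1)$\Rightarrow$(2): the assumption $d=0$ together with the fibre formulas yields both stated dimension equalities, the grade equality is automatic, and the defect identity promotes $I$-Cohen Macaulayness of $M$ to $IS$-Cohen Macaulayness of $M\otimes_R N$. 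For (2)$\Leftrightarrow$(3): each of the dimension equalities stated in (2) and (3), combined with the corresponding fibre formula, is equivalent to $d=0$, after which the two formulations coincide. For (2) (or (3)) $\Rightarrow$ (1): the $IS$-Cohen Macaulayness of $M\otimes_R N$ together with the defect identity forces $M$ to be $I$-Cohen Macaulay, and the equality $d=0$ extracted from the stated dimension hypothesis is exactly the extra clause in (1).

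The main obstacle I anticipate is making the grade identity airtight without assuming faithful flatness of $N$: the hypothesis $M\otimes_R N \neq I(M\otimes_R N)$ must be inserted precisely where one argues that a maximal $M$-regular sequence in $I$ cannot be strictly extended on $M\otimes_R N$. Once this non-degeneracy step is handled, the dimension statements follow from the fibre formula for flat local maps, and the remainder is purely formal manipulation of the inequality in \eqref{e4} combined with the definition of $I$-Cohen Macaulayness.
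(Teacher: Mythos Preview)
Your proposal is correct and rests on the same two ingredients as the paper's proof: the fibre--dimension formula (cited as \cite[Theorem~A.11]{h}) applied once to $M$ and once to $M/IM$, together with the behaviour of grade under flat base change. The organization differs slightly: you establish the grade identity $\grade(I,M)=\grade(IS,M\otimes_R N)$ unconditionally and then read off all implications from your defect identity, whereas the paper uses only the inequality $\grade(IS,M\otimes_R N)\ge\grade(I,M)$ from \cite[Proposition~1.1.2]{h} and squeezes it against \eqref{e4} under the hypothesis of (1). Your packaging is a bit cleaner, but the content is the same.

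One remark on the obstacle you anticipate: in the standing local setup $f:(R,\mathfrak m)\to(S,\mathfrak n)$ with $N$ a nonzero finitely generated $S$-module flat over $R$, the module $N$ is automatically faithfully flat over $R$, since $\mathfrak m N\subseteq\mathfrak n N\subsetneq N$ by Nakayama; hence $N/\mathfrak p N\neq 0$ for every prime $\mathfrak p\subseteq\mathfrak m$. Consequently the grade identity really is unconditional here (e.g.\ via Koszul homology and $H_i(\underline{x};M)\otimes_R N\cong H_i(\underline{x};M\otimes_R N)$), and the non-triviality hypothesis $M\otimes_R N\neq I(M\otimes_R N)$ is only needed to ensure that $N\neq 0$ and that the grade on the right is finite.
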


\begin{proof}
Only the implications $(1)\Rightarrow (2)$ and $(1)\Rightarrow(3)$ are proved here. The proof of $(2)$ implies $(1)$ is quit easy from Definition \ref{d} and \cite[A.11 Thoerem]{h}. Also, $(3)$ is equivalent to $(2)$ follows from the following isomorphism
\[
\frac{N/IN}{(\mathfrak{m}/I) (N/IN)}\cong N/\mathfrak{m}N.
\]
Assume that $M$ is $I$-Cohen Macaulay, Since, $\dim_S N/\mathfrak{m}N=0$, then
\begin{equation}\label{e7}
\grade(I,M)=\dim_S M\otimes_R N-\dim_R M/IM \text{ and }
\end{equation}
\[
\dim_{S/IS} (M/IM)\otimes_{R/I} (N/IN)=\dim_{R/I} (M/IM)+\dim_{S/IS} \frac{N/IN}{(\mathfrak{m}/I)(N/IN)},
\]
above equations hold because of  \cite[A.11 Thoerem]{h}. Since $\dim_{R/I} M/IM=\dim_{R} M/IM$ and
\[
(M/IM)\otimes_{R/I} (N/IN)\cong ( M\otimes_R N)/I( M\otimes_R N).
\]
So, last equation can be written as:
\[
\dim_{S}  ( M\otimes_R N)/I( M\otimes_R N)=\dim_{R} (M/IM)+\dim_{S} \frac{N/IN}{(\mathfrak{m}/I) (N/IN)}.
\]
Then, Equations (\ref{e4}) and (\ref{e7}) imply that
\[
\grade(I,M)
\geq \dim_S (M\otimes_R N)-\dim_S (M\otimes_R N)/I(M\otimes_R N)\geq \grade(IS, M\otimes_R N).
\]
By \cite[Proposition 1.1.2]{h}, $\grade(IS, M\otimes_R N)\geq \grade(I, M)$. This proves all the claims in $(2)$ and $(3)$.
\end{proof}

\begin{corollary}\label{8}
Let $f:R\to S$ be a ring map between Noetherian rings and $I$ any proper ideal of $R$. Suppose that $M$ and $N$ are finitely generated modules over $R$ and $S$ resp. If $N$ is flat over $R$, $I\subseteq\mathfrak{p}$ and $(M\otimes_R N)_\mathfrak{p}\neq I(M\otimes_R N)_\mathfrak{p}$, where $\mathfrak{p}=\mathfrak{q}\cap R$ and $\mathfrak{q}\in \Spec(S)$. Then the following conditions are equivalent:
\begin{itemize}
  \item [(1)] $M_\mathfrak{p}$ is $IR_\mathfrak{p}$-Cohen Macaulay and $\dim_{S_\mathfrak{q}}\frac{(M\otimes_R N)_\mathfrak{p}}{\mathfrak{p}(M\otimes_R N)_\mathfrak{p}}=0$.
  \item [(2)]  $(M\otimes_R N)_\mathfrak{p}$ is $IS_\mathfrak{q}$-Cohen Macaulay, $\grade(IR_\mathfrak{p},M_\mathfrak{p})=\grade(IS_\mathfrak{q},(M\otimes_R N)_\mathfrak{p})$ and
      \[
      \dim_{R_\mathfrak{p}} M_\mathfrak{p}/IM_\mathfrak{p}=\dim_{S_\mathfrak{q}}\frac{(M\otimes_R N)_\mathfrak{p}}{IS_\mathfrak{q}(M\otimes_R N)_\mathfrak{p}}.
      \]
  \item [(3)] $(M\otimes_R N)_\mathfrak{p}$ is $IS_\mathfrak{q}$-Cohen Macaulay, $\grade(IR_\mathfrak{p},M_\mathfrak{p})=\grade(IS_\mathfrak{q},(M\otimes_R N)_\mathfrak{p})$ and
      \[
      \dim_{R_\mathfrak{p}} M_\mathfrak{p}=\dim_{S_\mathfrak{q}} (M\otimes_R N)_\mathfrak{p}.
      \]
\end{itemize}
\end{corollary}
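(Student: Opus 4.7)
The strategy is to reduce the global statement to the local situation already handled by Theorem \ref{6} via localization at $\mathfrak{p}$ and $\mathfrak{q}$. Set $R' = R_\mathfrak{p}$, $\mathfrak{m}' = \mathfrak{p} R_\mathfrak{p}$, $S' = S_\mathfrak{q}$, $\mathfrak{n}' = \mathfrak{q} S_\mathfrak{q}$, and let $f':R'\to S'$ be the induced map. Because $\mathfrak{p}=f^{-1}(\mathfrak{q})$, the map $f'$ is a local homomorphism of Noetherian local rings. Put $M' = M_\mathfrak{p}$, $N' = N_\mathfrak{q}$, and $I' = IR_\mathfrak{p}$; then $M'$ is finitely generated over $R'$, $N'$ is finitely generated over $S'$, and $I'$ is a proper ideal of $R'$ since $I\subseteq \mathfrak{p}$.

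The first key verification is that $N'$ is flat over $R'$. Flatness is preserved under localization, so $N_\mathfrak{q}$ is flat over $R$; since every element of $R\setminus\mathfrak{p}$ is sent by $f$ into $S\setminus\mathfrak{q}$ and hence acts invertibly on $N_\mathfrak{q}$, the $R$-action on $N'$ factors through $R'$ and yields $R'$-flatness. The second key identification is the natural isomorphism
\[
M' \otimes_{R'} N' \;\cong\; (M\otimes_R N)\otimes_S S_\mathfrak{q} \;=\; (M\otimes_R N)_\mathfrak{q},
\]
which also coincides (as a module) with the localization $(M\otimes_R N)_\mathfrak{p}$ appearing in the statement once one remembers that the $S$-action factors through $S_\mathfrak{q}$. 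Under this identification the standing nondegeneracy hypothesis $(M\otimes_R N)_\mathfrak{p}\neq I(M\otimes_R N)_\mathfrak{p}$ becomes exactly $M'\otimes_{R'}N'\neq I'(M'\otimes_{R'}N')$, which is the hypothesis required to invoke Theorem \ref{6}.

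With these identifications in place, Theorem \ref{6} applied to $(R',S',f',M',N',I')$ gives immediately the equivalence of
\[
\textbf{(1')}\ M' \text{ is } I'\text{-CM and } \dim_{S'} N'/\mathfrak{m}'N'=0,
\]
\[
\textbf{(2')}\ M'\otimes_{R'}N' \text{ is } I'S'\text{-CM},\ \grade(I',M')=\grade(I'S',M'\otimes_{R'}N'),\ \dim M'/I'M'=\dim (M'\otimes_{R'}N')/I'S'(M'\otimes_{R'}N'),
\]
\[
\textbf{(3')}\ M'\otimes_{R'}N' \text{ is } I'S'\text{-CM},\ \grade(I',M')=\grade(I'S',M'\otimes_{R'}N'),\ \dim M'=\dim (M'\otimes_{R'}N').
\]
Translating back through $R'=R_\mathfrak{p}$, $S'=S_\mathfrak{q}$, $I'=IR_\mathfrak{p}$, and $M'\otimes_{R'}N' = (M\otimes_R N)_\mathfrak{p}$, these are precisely the three conditions (1), (2), (3) of the corollary.

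The main obstacle is bookkeeping rather than mathematical depth: one must check that the ambient $S_\mathfrak{q}$-module structure on $(M\otimes_R N)_\mathfrak{p}$ agrees with the $S'$-module structure on $M'\otimes_{R'}N'$, and that the grades and dimensions written in the corollary (with some localized at $\mathfrak{p}$ and some at $\mathfrak{q}$) match the corresponding invariants appearing in the conclusions (1')-(3') of Theorem \ref{6}. Once these identifications are laid out, the proof is merely the application of Theorem \ref{6} in the localized setting.
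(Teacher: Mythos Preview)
Your approach is exactly the paper's: the paper's entire proof is the single sentence ``It is a consequence of Theorem \ref{6} in view of the fact that the ring map $R_\mathfrak{p}\to S_\mathfrak{q}$ is local,'' and you have simply spelled out the identifications behind that sentence. The one caveat is that your asserted coincidence of $(M\otimes_R N)_\mathfrak{q}$ with $(M\otimes_R N)_\mathfrak{p}$ is not literally true in general (inverting $f(R\setminus\mathfrak{p})$ is weaker than inverting $S\setminus\mathfrak{q}$), so the symbol $(M\otimes_R N)_\mathfrak{p}$ in the corollary must be read as $(M\otimes_R N)_\mathfrak{q}$ for the argument to go through; the paper's one-line proof glosses over this point as well.
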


\begin{proof}
It is a consequence of Theorem \ref{6} in view of the fact that the ring map $R_\mathfrak{p}\to S_\mathfrak{q}$ is local.
\end{proof}

\begin{proposition}
 Let $I$ and $J$ be two proper ideals of $R$. Then
 \begin{itemize}
\item [(1)] If $M$ is $J$-Cohen Macaulay and $I\subseteq J$, then $M$ is $I$-Cohen Macaulay provided that $\grade(I,M)=\grade(J,M)$.
\item [(2)] If $M$ is $I$-Cohen Macaulay and $I\subseteq J$ , then $M$ is $J$-Cohen Macaulay provided that $\dim(M/JM)=\dim (M/IM)$.
\item [(3)] If $M$ is both $I$ and $J$-Cohen Macaulay, then $M$ is $I\cap J$-Cohen Macaulay.
 \end{itemize}
\end{proposition}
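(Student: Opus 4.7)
The overall plan is to use the general upper bound
\[
\grade(I,M)+\dim(M/IM)\leq\dim M
\]
supplied by Equation (\ref{e4}) as a ceiling, and to produce a matching lower bound from the $I$- or $J$-Cohen Macaulay hypothesis combined with elementary behaviour of grade and dimension under the given relations between $I$ and $J$.

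For (1), the inclusion $I\subseteq J$ gives $IM\subseteq JM$, so $\dim(M/IM)\geq\dim(M/JM)$. Using the hypothesis $\grade(I,M)=\grade(J,M)$ together with $J$-Cohen Macaulayness then yields
\[
\grade(I,M)+\dim(M/IM)\geq\grade(J,M)+\dim(M/JM)=\dim M,
\]
and the reverse inequality from Equation (\ref{e4}) forces equality. Part (2) is the mirror statement: $I\subseteq J$ gives $\grade(I,M)\leq\grade(J,M)$ (any $M$-regular sequence in $I$ lies in $J$), so the hypothesis $\dim(M/IM)=\dim(M/JM)$ together with $I$-Cohen Macaulayness yields
\[
\grade(J,M)+\dim(M/JM)\geq\grade(I,M)+\dim(M/IM)=\dim M,
\]
again matched from above by Equation (\ref{e4}).

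For (3) the plan is to reduce to two auxiliary identities,
\[
\dim(M/(I\cap J)M)=\max\{\dim(M/IM),\dim(M/JM)\}
\]
and
\[
\grade(I\cap J,M)=\min\{\grade(I,M),\grade(J,M)\}.
\]
The first follows from the support identity $V(I\cap J)=V(I)\cup V(J)$, intersected with $\Supp M$. The second I will obtain by applying $\Hom_R(-,M)$ to the short exact sequence
\[
0\to R/(I\cap J)\to R/I\oplus R/J\to R/(I+J)\to 0
\]
and tracking the $\Ext$ long exact sequence; equivalently one may invoke Mayer--Vietoris for local cohomology attached to the decomposition $V(I\cap J)=V(I)\cup V(J)$. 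The containments $I+J\supseteq I,J$ give $\grade(I+J,M)\geq\max\{\grade(I,M),\grade(J,M)\}$, which supplies exactly the vanishing needed to push through the non-trivial direction $\grade(I\cap J,M)\geq\min\{\grade(I,M),\grade(J,M)\}$. Once both identities are in hand, assume without loss of generality that $\dim(M/IM)\geq\dim(M/JM)$; the two Cohen Macaulay hypotheses then force $\grade(I,M)\leq\grade(J,M)$, and adding the identities yields
\[
\grade(I\cap J,M)+\dim(M/(I\cap J)M)=\grade(I,M)+\dim(M/IM)=\dim M,
\]
as required.

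The hard part is the non-trivial inequality in the grade identity used in (3); the dimensional identity there is a routine support calculation, and parts (1) and (2) drop out almost immediately once the general upper bound from Equation (\ref{e4}) is available.
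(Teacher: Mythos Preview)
Your arguments for (1) and (2) coincide with the paper's. For (3) your route is correct but slightly longer than the paper's: the authors simply observe that $\grade(I\cap J,M)=\min\{\grade(I,M),\grade(J,M)\}$ (this is exactly \cite[Proposition~1.2.10(c)]{h}, so no Mayer--Vietoris or $\Ext$ argument is needed), then apply part~(1) with $I\cap J\subseteq I$ (taking $I$ to be the one with smaller grade). In particular the dimensional identity $\dim(M/(I\cap J)M)=\max\{\dim(M/IM),\dim(M/JM)\}$ that you establish is not needed at all: once $\grade(I\cap J,M)=\grade(I,M)$ is known, part~(1) handles the dimension inequality internally via Equation~(\ref{e4}). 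Your approach has the minor advantage of making the equality $\dim(M/(I\cap J)M)=\dim(M/IM)$ explicit, but the paper's reduction to~(1) is quicker and avoids reproving a standard grade identity that is already in the literature.
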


\begin{proof}
$(1)$ Since $\dim(\frac{M}{JM})\leq \dim(\frac{M}{IM})$ then,
\[
\grade(I,M)+ \dim(\frac{M}{IM})\leq \dim(M)= \grade(J,M) + \dim(\frac{M}{JM})\leq \grade(I,M)+ \dim(\frac{M}{IM})
\]
Hence, $M$ is $I$-Cohen Macaulay. With the similar arguments, $(2)$ can be proved. Also, $(3)$ follows from $(1)$ and \cite[Proposition 1.2.10]{h}.
\end{proof}

Cohen Macaulayness as well as almost Cohen Macaulayness are preserved when module is quotient by a regular sequence, see \cite[Theorem 2.1.3]{h} and \cite[Lemma 2.7]{k}. Same phenomena for $I$-Cohen Macaulay modules is given here.
\begin{theorem}\label{1}
Let $I$ be a proper ideal of a Noetherian local ring $R$, $M$ an $R$-module and $x_{1}, x _{2}, \cdots x_{r}$ an $M$-regular sequence in $I$. Then $M$ is $I$-Cohen Macaulay if and only if $\frac{M}{<x_{1}, x _{2}, \cdots x_{r}>M}$ is $\frac{I}{<x_{1}, x _{2}, \cdots x_{r}>}$-Cohen Macaulay over $\frac{R}{<x_{1}, x _{2}, \cdots x_{r}>}$.
\end{theorem}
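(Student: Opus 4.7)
My plan is to reduce the biconditional to three standard identities relating the invariants of $M$ to those of $\bar M := M/\langle x_1,\dots,x_r\rangle M$, and then observe that the defining equality in Definition \ref{d} simply shifts by $r$ on both sides. Write $\bar R := R/\langle x_1,\dots,x_r\rangle$ and $\bar I := I/\langle x_1,\dots,x_r\rangle$. I would aim to prove
\[
\grade(\bar I,\bar M) = \grade(I,M) - r, \qquad \dim_{\bar R}(\bar M) = \dim_R(M) - r, \qquad \dim_{\bar R}(\bar M/\bar I\bar M) = \dim_R(M/IM),
\]
and then combine them.

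The first identity is the standard behaviour of grade modulo a regular sequence: since $x_1,\dots,x_r \in I$ is $M$-regular, an easy induction on $r$ (peeling off one element at a time, using that $x_1$ kills nothing of $\Ext^{\ge 1}_R(R/I,M)$-computing cohomology shifts) gives $\grade(I,M/\langle x_1,\dots,x_r\rangle M) = \grade(I,M) - r$, and this coincides with $\grade(\bar I, \bar M)$ because $\langle x_1,\dots,x_r\rangle$ annihilates $\bar M$. The second identity uses that $R$ is local and $I$ is proper, so $x_1,\dots,x_r \in \mathfrak m$; an $M$-regular element in $\mathfrak m$ drops the dimension of $M$ by exactly one, and iterating yields $\dim(\bar M) = \dim(M) - r$. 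The third identity is immediate from $\langle x_1,\dots,x_r\rangle \subseteq I$, which gives a canonical isomorphism $\bar M/\bar I\bar M \cong M/IM$ (and the dimension does not depend on whether we view it over $R$ or $\bar R$).

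Once these are in hand, the proof is a one-line computation: $M$ is $I$-Cohen Macaulay iff $\grade(I,M) + \dim(M/IM) = \dim(M)$, which by the three identities is equivalent to $\grade(\bar I,\bar M) + r + \dim(\bar M/\bar I\bar M) = \dim(\bar M) + r$, i.e.\ to $\bar M$ being $\bar I$-Cohen Macaulay over $\bar R$. The only step requiring genuine care is the grade identity, because the Rees-type inequality in \eqref{e4} gives only $\le$, so one must invoke the Ext (or Koszul) characterization of grade to get equality; this is the main technical point, and it is precisely where the full strength of the $M$-regularity of the $x_i$'s, rather than merely their membership in $I$, is used.
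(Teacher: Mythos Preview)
Your proof is correct and follows exactly the paper's approach: the paper likewise invokes the isomorphism $\bar M/\bar I\bar M \cong M/IM$, the grade identity $\grade(\bar I,\bar M)=\grade(I,M)-r$ (citing \cite[Proposition~1.2.10(d)]{h}), and the dimension drop encoded in Equation~\eqref{e4}, then reads off the equivalence directly from Definition~\ref{d}. Your write-up simply makes the identity $\dim(\bar M)=\dim(M)-r$ explicit rather than pointing to \eqref{e4}.
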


\begin{proof}
Note that there is an isomorphism
\[
\frac{\acute{M}}{\acute{I}\acute{M}} \cong\frac{M}{IM},
\]
where $\acute{M} = \frac{M}{<x_{1},x _{2}, \cdots x_{r}>M}$ and $\acute{I} = \frac{I}{<x_{1},x _{2}, \cdots x_{r}>}$. So, the proof follows directly from Definition \ref{d}, Equation (\ref{e4}) and from the following equality between grades:
\[\grade(\frac{I}{<x_{1},x _{2}, \cdots x_{r}>},\frac{IM}{<x_{1},x _{2}, \cdots x_{r}>M})= \grade(I,M)- r,
\]
(see \cite[Proposition 1.2.10(d)]{h}).
\end{proof}

For any $\mathfrak{p} \in \Ass_{R}(M)$, a relation between dimension of a Cohen Macaulay module and dimension of $R/\mathfrak{p}$ is proved in \cite[Theorem 2.1.2]{h}. $i.e.$ a Cohen Macaulay module has no embedded primes. A similar relation is developed for $\mathfrak{p}$-Cohen Macaulay modules in the following result.

\begin{theorem}\label{6.6}
Let $M$ be a $\mathfrak{p}$-Cohen Macaulay module over a local ring $R$, where $\mathfrak{p}\in \Supp_R(M)$. Then
\[
\dim_R M=\dim_{R} R/\mathfrak{q}, \text{ for some } \mathfrak{q}\in \Ass_R(M) \text{ such that } \mathfrak{q}\subseteq \mathfrak{p}.
\]
\end{theorem}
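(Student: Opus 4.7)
My plan is to argue by induction on the grade $r=\grade(\mathfrak{p},M)$, using Theorem \ref{1} to reduce the grade by one at each step.

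For the base case $r=0$, I first observe that the $\mathfrak{p}$-Cohen Macaulay equality forces $\dim_R M/\mathfrak{p}M=\dim_R M$. Since $\dim_R M/\mathfrak{p}M\le \dim R/\mathfrak{p}$ and $\dim R/\mathfrak{p}\le \dim_R M$ (the latter because $\mathfrak{p}\in\Supp_R(M)$ gives $\mathfrak{p}\supseteq\Ann_R M$), both inequalities must be equalities, so $\dim R/\mathfrak{p}=\dim_R M$. Then I pick $\mathfrak{q}\in\Ass_R(M)$ with $\mathfrak{q}\subseteq\mathfrak{p}$, which exists by the standard description of $\Supp_R(M)$. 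Squeezing $\dim R/\mathfrak{p}\le\dim R/\mathfrak{q}\le\dim R/\Ann_R M=\dim_R M$ forces $\dim R/\mathfrak{q}=\dim_R M$, finishing the base case.

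For the inductive step $r>0$, I pick an $M$-regular element $x\in\mathfrak{p}$. By Theorem \ref{1}, $M/xM$ is $(\mathfrak{p}/(x))$-Cohen Macaulay over $R/(x)$, and the grade drops to $r-1$, so the induction hypothesis (applied to the prime $\mathfrak{p}/(x)\in\Supp_{R/(x)}(M/xM)$) yields a prime $\mathfrak{q}'\supseteq (x)$ with $\mathfrak{q}'/(x)\in\Ass_{R/(x)}(M/xM)$, $\mathfrak{q}'\subseteq\mathfrak{p}$, and
\[
\dim R/\mathfrak{q}'=\dim_{R/(x)}(R/(x))/(\mathfrak{q}'/(x))=\dim_{R/(x)} M/xM=\dim_R M-1,
\]
using that $x$ is $M$-regular. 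Now $\mathfrak{q}'\in\Supp_R(M)$, so I can choose $\mathfrak{q}\in\Ass_R(M)$ with $\mathfrak{q}\subseteq\mathfrak{q}'\subseteq\mathfrak{p}$. Since $x$ is $M$-regular, $x\notin\mathfrak{q}$ whereas $x\in\mathfrak{q}'$, forcing the inclusion $\mathfrak{q}\subsetneq\mathfrak{q}'$ to be strict. Prepending this step to a saturated chain from $\mathfrak{q}'$ to the maximal ideal gives $\dim R/\mathfrak{q}\ge\dim R/\mathfrak{q}'+1=\dim_R M$, while the reverse inequality is automatic, so equality holds.

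The delicate point I expect is the final descent from $\mathfrak{q}'$ to $\mathfrak{q}$: one must check both that the strict containment $\mathfrak{q}\subsetneq\mathfrak{q}'$ actually increases $\dim R/(-)$ by at least one, and that a suitable $\mathfrak{q}\in\Ass_R(M)$ with $\mathfrak{q}\subseteq\mathfrak{q}'$ exists in the first place. Both follow from $\mathfrak{q}'\in\Supp_R(M)$ together with the regularity of $x$ on $M$, which keeps every associated prime of $M$ strictly below $\mathfrak{q}'$.
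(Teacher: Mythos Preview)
Your proof is correct and follows essentially the same route as the paper's: induction on $\grade(\mathfrak{p},M)$, reducing via Theorem~\ref{1} and then descending from an associated prime $\mathfrak{q}'$ of $M/xM$ to one of $M$. Your base case is in fact argued a bit more carefully than the paper's (the paper asserts $\mathfrak{p}=\mathfrak{q}$, which is stronger than what is actually justified or needed), but the inductive step is identical in substance.
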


\begin{proof}
If $\grade(\mathfrak{p}, M)=0$, then  $\mathfrak{q}\subseteq \mathfrak{p}$ for some $\mathfrak{q}\in \Ass_R(M)$. Since $\dim_R M/\mathfrak{p}M\leq\dim_R R/\mathfrak{p}$ and M is $\mathfrak{p}$-Cohen Macaulay, Then
\[
\text{ $\dim_R M=\dim_R R/\mathfrak{p}$ and $\mathfrak{p}=\mathfrak{q}$.}
\]
Now, if $\grade(\mathfrak{p}, M)>0$. Then, there exists an $M$-regular element $x\in \mathfrak{p}$. Let $\bar{M}=M/xM$, then, by Theorem \ref{1}, $\bar{M}$ is $\mathfrak{p}/<x>$-Cohen Macaulay. By induction on $\bar{M}$,
\[
\dim_{R} \bar{M}=\dim_{R/<x>} R/\mathfrak{q}_1=\dim_{R} {R}/\mathfrak{q}_1,
\]
\[
\text{ for some } \mathfrak{q}_1/<x>\in \Ass_{R/<x>}(\bar{M}) \text{ such that } \mathfrak{q}_1/<x>\subseteq \mathfrak{p}/<x>.
\]
By \cite[9.A Proposition]{m}, $\mathfrak{q}_1\in \Ass_R (\bar{M})$ such that $ \mathfrak{q}_1\subseteq \mathfrak{p}$. Then $\mathfrak{q}_1\in\Supp_R(\bar{M})$. In particular, $\mathfrak{q}_1\in\Supp_R(M)$ and $x\in \mathfrak{q}_1$. Then, there exists $\mathfrak{q}\in \Ass_{R}(M)$ such that $\mathfrak{q}\subsetneq \mathfrak{q}_1\subseteq \mathfrak{p}$. Hence
\[
\dim_R R/\mathfrak{q}\geq 1+ \dim_R R/\mathfrak{q}_1=1+\dim_{R} \bar{M}=\dim_R M.
\]
This proves that $\dim_R R/\mathfrak{q}=\dim_R M$, where $\mathfrak{q}\in \Ass_{R}(M)$ such that $\mathfrak{q}\subseteq \mathfrak{p}$.
\end{proof}

 \begin{lemma}\label{8.1}
With the same assumptions as in Theorem \ref{1}, assume that $S$ is a multiplicative closed subset of $R$ and $\mathfrak{p}\in \Supp_R(M)$ such that $\mathfrak{p}\cap S=\emptyset$. If $M$ is $\mathfrak{p}$-Cohen Macaulay, then
\begin{itemize}
  \item [(1)] $\grade(\mathfrak{p},M)=\grade(S^{-1}\mathfrak{p},S^{-1}M).$

\item [(2)] If $\dim_{R_\mathfrak{p}} M_\mathfrak{p}\geq 1$. Then, $\grade(\mathfrak{p}, M)\geq 1$.
\end{itemize}
\end{lemma}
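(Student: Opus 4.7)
The plan is to handle part (2) directly and then use induction on $\grade(\mathfrak{p}, M)$ for part (1), with the grade-zero analysis from (2) serving as the base case. For (2), I argue the contrapositive. If $\grade(\mathfrak{p}, M)=0$ then $\mathfrak{p}$ consists of zero divisors on $M$, so by prime avoidance $\mathfrak{p}\subseteq \mathfrak{q}$ for some $\mathfrak{q}\in \Ass_R(M)$. Combined with $\mathfrak{p}$-Cohen Macaulayness this yields
\[
\dim M=\dim (M/\mathfrak{p} M)\leq\dim R/\mathfrak{p}\leq\dim R/\mathfrak{q}\leq\dim M,
\]
so all inequalities are equalities; in particular $\dim R/\mathfrak{p}=\dim M$ and $\mathfrak{p}=\mathfrak{q}$. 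Now if $\dim M_\mathfrak{p}\geq 1$, some $\mathfrak{q}_0\in\Supp_R(M)$ satisfies $\mathfrak{q}_0\subsetneq \mathfrak{p}$, and prepending $\mathfrak{q}_0$ to a maximal chain starting at $\mathfrak{p}$ gives $\dim R/\mathfrak{q}_0\geq \dim R/\mathfrak{p}+1=\dim M+1$, contradicting $\dim R/\mathfrak{q}_0\leq \dim R/\Ann_R(M)=\dim M$. Hence $\grade(\mathfrak{p}, M)\geq 1$.

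For (1), the inequality $\grade(\mathfrak{p}, M)\leq\grade(S^{-1}\mathfrak{p}, S^{-1}M)$ is standard, since an $M$-regular sequence in $\mathfrak{p}$ localizes to an $S^{-1}M$-regular sequence in $S^{-1}\mathfrak{p}$; only the reverse direction needs work. I would induct on $n:=\grade(\mathfrak{p}, M)$. The base case $n=0$ uses the observation from (2) that $\mathfrak{p}\in \Ass_R(M)$; since $\mathfrak{p}\cap S=\emptyset$, this gives $S^{-1}\mathfrak{p}\in \Ass_{S^{-1}R}(S^{-1}M)$, whence $\grade(S^{-1}\mathfrak{p}, S^{-1}M)=0$. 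For $n\geq 1$, choose an $M$-regular element $x\in \mathfrak{p}$; then $x/1\in S^{-1}\mathfrak{p}$ is $S^{-1}M$-regular, and both grades drop by exactly one upon killing $x$. By Theorem \ref{1}, $M/xM$ is $(\mathfrak{p}/\langle x\rangle)$-Cohen Macaulay over $R/\langle x\rangle$, and the hypotheses on $\mathfrak{p}$ transfer to $\mathfrak{p}/\langle x\rangle$ in the quotient, with multiplicatively closed set the image $\bar S$ of $S$. Applying the inductive hypothesis at level $n-1$ and adding $1$ back closes the induction.

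The main technical point, more bookkeeping than genuine obstacle, is verifying in the inductive step that $\mathfrak{p}/\langle x\rangle$ lies in $\Supp_{R/\langle x\rangle}(M/xM)$ and is disjoint from $\bar S$, and that $\bar S^{-1}(M/xM)$ is canonically $S^{-1}M/(x/1)S^{-1}M$, so that the grade drops on the two sides match. With these identifications in place the induction proceeds by formal manipulation.
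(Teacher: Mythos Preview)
Your overall strategy---show $\mathfrak{p}\in\Ass_R(M)$ when $\grade(\mathfrak{p},M)=0$, use this simultaneously for (2) and for the base case of (1), then induct on the grade by passing to $M/xM$ via Theorem~\ref{1}---is exactly the paper's approach. The inductive step and the bookkeeping you flag are fine.

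There is, however, a slip in your displayed chain. Prime avoidance gives $\mathfrak{p}\subseteq\mathfrak{q}$ for some $\mathfrak{q}\in\Ass_R(M)$, and for primes $\mathfrak{p}\subseteq\mathfrak{q}$ in a local ring one has $\dim R/\mathfrak{p}\geq\dim R/\mathfrak{q}$, not $\leq$. So the step $\dim R/\mathfrak{p}\leq\dim R/\mathfrak{q}$ is backwards, and in particular the conclusion $\mathfrak{p}=\mathfrak{q}$ does not follow from that chain (nothing rules out $\mathfrak{q}$ being an embedded prime strictly above $\mathfrak{p}$).

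The repair is easy and you already have the ingredients. Drop $\mathfrak{q}$ from the chain and use $\mathfrak{p}\in\Supp_R(M)$ directly:
\[
\dim M=\dim(M/\mathfrak{p}M)\leq\dim R/\mathfrak{p}\leq\dim R/\Ann_R(M)=\dim M,
\]
so $\dim R/\mathfrak{p}=\dim M$. Your own $\mathfrak{q}_0$ argument then shows that no prime of $\Supp_R(M)$ lies strictly below $\mathfrak{p}$, i.e.\ $\mathfrak{p}$ is minimal in $\Supp_R(M)$, hence $\mathfrak{p}\in\Ass_R(M)$. That is precisely what is needed both for (2) and for the base case of (1). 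The paper obtains the same fact by quoting the grade-zero case in the proof of Theorem~\ref{6.6}.
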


\begin{proof}
$(1)$ If $\grade(\mathfrak{p}, M)=0$. Form the proof of Theorem \ref{6.6}, it follows that $\mathfrak{p}\in \Ass_R(M)$. Hence, $S^{-1}\mathfrak{p} \in \Ass_{S^{-1}R}(S^{-1}M)$ and $\grade(\mathfrak{p},M)=\grade(S^{-1}\mathfrak{p},S^{-1}M)=0$.

\par If $\grade(\mathfrak{p}, M)>0$. Then, there exists an $M$-regular element $x\in \mathfrak{p}$. Note that $x\in S^{-1}\mathfrak{p}$ is also $S^{-1}M$-regular. Let $\bar{M}=M/xM$. By induction on $\bar{M}$
\[
\grade(S^{-1}\mathfrak{p}, S^{-1}\bar{M})=\grade(\mathfrak{p}, \bar{M}).
\]
So, the result is proved by \cite[Proposition 1.2.10]{h}.

$(2)$ It is easy in view of arguments presented above.
\end{proof}

\begin{problem}
Perseverance of $I$-Cohen Macaulayness in module of fractions is still an open problem. However, in case of localization at a prime ideal is discussed in Theorem \ref{6.2}.
\end{problem}

By \cite[Theorem 2.1.7]{h}, localization at prime ideals of a Cohen Macaulay module is Cohen Macaulay. Same is true (with some conditions on ideal) for almost Cohen Macaulay modules, see \cite[Lemma 2.6]{h}. Further, system of parameters and maximal regular sequences coincide in Cohen Macaulay modules and their relation in almost Cohen Macaulay modules is described in \cite[Theorem 1.7]{k}. In case of $I$-Cohen Macaulay modules these results are developed in next Theorem.
\begin{theorem}\label{6.2}
With the previous notion, let $M$ be a $\mathfrak{p}$-Cohen Macaulay. Then the following statements hold:
\begin{itemize}
  \item [(1)] $M_\mathfrak{p}$ is Cohen Macaulay over $R_\mathfrak{p}$.
  \item [(2)] Ever maximal $M$-regular sequence in $\mathfrak{p}$ is a system of parameters of $M_\mathfrak{p}$. That is if $x_1,\dots,x_n\in \mathfrak{p}$ is a maximal regular sequence over $M$.  Then $\frac{x_1}{1},\dots,\frac{x_n}{1}\in \mathfrak{p}R_\mathfrak{p}$ is a system of parameters of $M_\mathfrak{p}$.
    \item [(3)] For any $\mathfrak{q} \in \Supp_R(M)$ such that $\mathfrak{q}\subseteq \mathfrak{p}$, $M_\mathfrak{q}$ is Cohen Macaulay over $R_\mathfrak{q}$.
\end{itemize}

\end{theorem}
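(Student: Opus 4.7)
The plan is to prove (1) and then deduce (2) and (3). The central idea for (1) is to establish $\depth M_\mathfrak{p} \geq \dim M_\mathfrak{p}$; combined with the universal inequality $\depth \leq \dim$ this forces equality, i.e., Cohen Macaulayness. Lemma \ref{8.1}(1) applied with $S = R \setminus \mathfrak{p}$ gives $\depth M_\mathfrak{p} = \grade(\mathfrak{p}R_\mathfrak{p}, M_\mathfrak{p}) = \grade(\mathfrak{p}, M)$, and the $\mathfrak{p}$-Cohen Macaulay hypothesis rewrites the right-hand side as $\dim_R M - \dim_R(M/\mathfrak{p}M)$. So it suffices to prove
\[
\dim_{R_\mathfrak{p}} M_\mathfrak{p} \leq \dim_R M - \dim_R(M/\mathfrak{p}M).
\]

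For this dimension inequality, first note $\dim_R(M/\mathfrak{p}M) = \dim R/\mathfrak{p}$: the support of $M/\mathfrak{p}M$ is $V(\mathfrak{p}) \cap \Supp_R(M)$, which contains $\mathfrak{p}$ (since $\mathfrak{p} \in \Supp_R(M)$) and all of whose members contain $\mathfrak{p}$, so the maximum of $\dim R/\mathfrak{p}'$ is attained at $\mathfrak{p}$ itself. Next, for any $\mathfrak{q} \in \Min_R(M)$ with $\mathfrak{q} \subseteq \mathfrak{p}$, concatenating a saturated chain from $\mathfrak{q}$ up to $\mathfrak{p}$ with one from $\mathfrak{p}$ up to the maximal ideal gives $\height(\mathfrak{p}/\mathfrak{q}) + \dim R/\mathfrak{p} \leq \dim R/\mathfrak{q} \leq \dim_R M$. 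Since $\dim_{R_\mathfrak{p}} M_\mathfrak{p} = \max_\mathfrak{q} \height(\mathfrak{p}/\mathfrak{q})$ as $\mathfrak{q}$ ranges over such minimal primes, taking the maximum delivers the required bound and completes (1).

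Part (2) is then immediate: a maximal $M$-regular sequence $x_1,\dots,x_n$ in $\mathfrak{p}$ has length $n = \grade(\mathfrak{p},M) = \depth M_\mathfrak{p} = \dim M_\mathfrak{p}$ by Lemma \ref{8.1}(1) and part (1); the images $x_i/1$ remain $M_\mathfrak{p}$-regular in $\mathfrak{p}R_\mathfrak{p}$ by flatness of localization, and in a Cohen Macaulay local module a regular sequence of length equal to the dimension is automatically a system of parameters, since each regular element drops the dimension by one. For (3), one applies the standard fact that localization of a Cohen Macaulay local module at a prime in its support is again Cohen Macaulay: for $\mathfrak{q} \subseteq \mathfrak{p}$ with $\mathfrak{q} \in \Supp_R(M)$ one has $M_\mathfrak{q} \cong (M_\mathfrak{p})_{\mathfrak{q}R_\mathfrak{p}}$, which is Cohen Macaulay by (1).

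The main obstacle I anticipate lies in the dimension inequality of (1): one must compare $\dim_{R_\mathfrak{p}} M_\mathfrak{p}$, $\dim_R M$, and $\dim R/\mathfrak{p}$ without assuming catenarity of $R$. The crucial ingredient is the identification of the minimal primes of $\Ann_{R_\mathfrak{p}}(M_\mathfrak{p})$ with the expansions of the $\mathfrak{q} \in \Min_R(M)$ that are contained in $\mathfrak{p}$, so that a single chain-concatenation argument applies uniformly across these primes and yields the bound. Once this is in place the remainder is formal, with the only other care needed being the routine verification that regular elements stay regular under localization.
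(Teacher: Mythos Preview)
Your argument is correct, and it takes a genuinely different route from the paper's. The paper proves (1) by induction on $\dim_R M$: once $\grade(\mathfrak{p},M)\geq 1$ (via Lemma~\ref{8.1}), one picks an $M$-regular $x\in\mathfrak{p}$, passes to $\bar M=M/xM$, invokes Theorem~\ref{1} to see that $\bar M$ is $\mathfrak{p}/\langle x\rangle$-Cohen Macaulay, applies the inductive hypothesis to get $\bar M_\mathfrak{p}$ Cohen Macaulay, and then lifts back using the standard fact that $M_\mathfrak{p}$ is Cohen Macaulay iff $M_\mathfrak{p}/xM_\mathfrak{p}$ is. Your approach is direct: you combine Lemma~\ref{8.1}(1) with the defining equality of $\mathfrak{p}$-Cohen Macaulayness and the chain-concatenation inequality $\height(\mathfrak{p}/\mathfrak{q})+\dim R/\mathfrak{p}\leq \dim R/\mathfrak{q}$ to obtain $\depth M_\mathfrak{p}\geq \dim M_\mathfrak{p}$ in one stroke. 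The inductive proof has the advantage of being parallel to the classical Cohen Macaulay arguments and of reusing Theorem~\ref{1}; your proof is shorter, avoids the induction entirely, and makes transparent exactly which dimension inequality is doing the work (and that catenarity is not needed, only the inequality). Both routes rely on Lemma~\ref{8.1}(1), so the dependence on the earlier results of the paper is the same.
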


\begin{proof}
Note that $(2)$ and $(3)$ are obvious in view of $(1)$ and Lemma \ref{8.1}. It is remain to prove the claim in $(1)$. If $\dim_R M=0$, then the result is true. So, let $\dim_R M\geq 1$, then by Lemma \ref{8.1}, $\grade(\mathfrak{p},M)\geq 1.$ So, there exists $x\in \mathfrak{p}$ which is $M$-regular. Also, it is $M_\mathfrak{p}$-regular in $\mathfrak{p}R_\mathfrak{p}$. Let $\bar{M}=M/xM$, then $\bar{M}$ is $\mathfrak{p}/x$-Cohen Macaulay, see Theorem \ref{1}. By induction, $\bar{M}_\mathfrak{p}$ is Cohen Macaulay over $\bar{R}_\mathfrak{p}$.
Hence $M_\mathfrak{p}/xM_\mathfrak{p}$ is Cohen Macaulay over $R_\mathfrak{p}/xR_\mathfrak{p}$. Now assertion follows from \cite[Proposition 19.3.3]{m}.
\end{proof}

\begin{remark}
By Example \ref{ex}$(1)$, the statements of Corollary \ref{6.2} are not true, if $R$ is not local.
\end{remark}

Since, the polynomial ring $R[x_1,\dots,x_n]$ and formal power series ring $R[|x_1,\dots,x_n|]$ inherit both Cohen Macaulayness and almost Cohen Macaulayness properties of $R$, see \cite[Theorem 2.1.9]{h} and \cite[Theorems 1.3 and 1.6]{k}. Same is true for $I$-Cohen Macaulay modules.
\begin{theorem}\label{5}
Let $I$ be a proper ideal of $R$. Then the following condition are equivalent:
\begin{itemize}
  \item [(1)] $R$ is $I$-Cohen Macaulay

   \item [(2)] $R[x_1,\dots,x_n]$ is $I[x_1,\dots,x_n]$-Cohen Macaulay and
  \[
      \grade(I[x_1,\dots,x_n],R[x_1,\dots,x_n])=\grade(I,R).
      \]
  \item [(3)] $R[|x_1,\dots,x_n|]$ is $I[|x_1,\dots,x_n|]$-Cohen Macaulay and
      \[
      \grade(I[|x_1,\dots,x_n|],R[|x_1,\dots,x_n|])=\grade(I,R).
      \]
\end{itemize}
\end{theorem}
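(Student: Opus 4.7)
The plan is to reduce to $n=1$ by induction on the number of variables and treat the polynomial ring $S=R[x]$ and the power series ring $S=R[|x|]$ in parallel. In either case set $J=IS$; the argument then rests on three classical facts for Noetherian $R$: (i) $\dim S = \dim R + 1$; (ii) $S/J \cong (R/I)[x]$, respectively $(R/I)[|x|]$, so $\dim S/J = \dim R/I + 1$; and (iii) $S$ is faithfully flat over $R$.

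With these in hand the equivalences become a short bookkeeping exercise. Using (i) and (ii), the defining identity for ``$S$ is $J$-Cohen Macaulay'' reads
\[
\grade(J,S) + \dim R/I + 1 \;=\; \dim R + 1,
\]
which, combined with the grade equality $\grade(J,S)=\grade(I,R)$, is exactly the condition that $R$ is $I$-Cohen Macaulay. This immediately gives $(2)\Rightarrow(1)$ and $(3)\Rightarrow(1)$, since the grade equality is part of the hypothesis in each case. For $(1)\Rightarrow(2)$ and $(1)\Rightarrow(3)$ I would separately verify $\grade(J,S)=\grade(I,R)$: the flat base change isomorphism $\Ext^i_S(S/J,S) \cong \Ext^i_R(R/I,R) \otimes_R S$, together with faithful flatness from (iii), guarantees that the two Ext groups vanish simultaneously, so both grades are detected by the same smallest index $i$. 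Once the grade equality is known, the same one-line substitution above yields that $S$ is $J$-Cohen Macaulay.

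The main obstacle is concentrated in the power-series variant. The dimension equality $\dim R[|x|] = \dim R + 1$, the identification $R[|x|]/IR[|x|] \cong (R/I)[|x|]$, and the faithful flatness of $R[|x|]$ over $R$ are standard for Noetherian $R$ but genuinely require the Noetherian hypothesis; I would cite them rather than reprove them. Their polynomial counterparts are elementary since $R[x]$ is a free $R$-module. Once these inputs are assembled, Definition \ref{d} and Equation (\ref{e4}) deliver the three-way equivalence by direct substitution, and the induction on $n$ proceeds without further surprises.
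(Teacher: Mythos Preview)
Your proposal is correct and follows essentially the same route as the paper: reduce to $n=1$, invoke the dimension formulas $\dim S=\dim R+1$ and $\dim(S/IS)=\dim(R/I)+1$, and use (faithful) flatness of $R\to S$ to compare grades. The only minor difference is that you obtain $\grade(IS,S)=\grade(I,R)$ directly from faithful flatness via the Ext base-change isomorphism, whereas the paper gets it by squeezing the flatness inequality $\grade(IS,S)\geq\grade(I,R)$ against the general bound $\grade(IS,S)\leq\dim S-\dim(S/IS)$ of Equation~(\ref{e4}); your version is slightly cleaner in that it does not appeal to Equation~(\ref{e4}).
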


\begin{proof}
It is enough to prove the result for $n=1$. Suppose that $(1)$ is true. Since $R \to R[x] $ is a flat ring map, then
\[
\begin{aligned}
\grade(I[x],R[x])
&\geq \grade(I,R)\\
&= \dim(R) - \dim(R/I)\\
&=\dim(R[x]) - \dim(R[x]/I[x])\\
&\geq \grade(I[x],R[x]), \text{ see Equation (\ref{e4})}.
\end{aligned}
\]
This proves $(2)$. The implication $(2)\Rightarrow (1)$ is straight forward. The proof of $(1)$ is equivalent to $(3)$ is so similar.
 \end{proof}

\begin{corollary}\label{5.1}
Let $I$ be any proper ideal of $R$ and $\mathfrak{p}\in \Spec(R)$ such that $\grade(I, R)=\height(I)$ and $\grade(\mathfrak{p}, R)=\height(\mathfrak{p})$. Then the following conditions hold:
\begin{itemize}
  \item [(1)] $R$ is $I$-Cohen Macaulay if and only if $R[|x_1,\dots,x_n|]$ is $I[|x_1,\dots,x_n|]$-Cohen Macaulay.

   \item [(2)] $R$ is $\mathfrak{p}$-Cohen Macaulay if and only if $R[x_1,\dots,x_n]$ is $\mathfrak{p}[x_1,\dots,x_n]$-Cohen Macaulay.
\end{itemize}
\end{corollary}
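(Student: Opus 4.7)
The plan is to deduce both parts from Theorem \ref{5}, where the only new input needed is to verify the grade equality that appears as a hypothesis in parts (2) and (3) of that theorem. In both parts the forward implication is essentially automatic: if $R$ is $I$- (resp.\ $\mathfrak{p}$-) Cohen Macaulay, Theorem \ref{5} directly yields the Cohen Macaulayness of the corresponding extension together with the matching grade identity, so no work is required on that side.

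For the reverse direction of (1), I would assume $R[|x_1,\dots,x_n|]$ is $I[|x_1,\dots,x_n|]$-Cohen Macaulay and aim to verify the hypothesis
\[
\grade(I[|x_1,\dots,x_n|], R[|x_1,\dots,x_n|]) = \grade(I, R)
\]
of Theorem \ref{5}(3). The strategy is to sandwich this grade. The lower bound $\grade(I[|x_1,\dots,x_n|], R[|x_1,\dots,x_n|]) \geq \grade(I, R)$ holds because $R \to R[|x_1,\dots,x_n|]$ is flat, so any $R$-regular sequence in $I$ remains regular in $R[|x_1,\dots,x_n|]$. The upper bound uses the general inequality $\grade \leq \height$ together with the standard fact $\height(I[|x_1,\dots,x_n|]) = \height(I)$. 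The hypothesis $\grade(I, R) = \height(I)$ then forces the two bounds to coincide, delivering the required grade equality, and Theorem \ref{5} concludes that $R$ is $I$-Cohen Macaulay.

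Part (2) follows by the same scheme applied to the polynomial extension $R \to R[x_1,\dots,x_n]$, now using the classical identity $\height(\mathfrak{p}[x_1,\dots,x_n]) = \height(\mathfrak{p})$ in place of the formal power series version, and the assumption $\grade(\mathfrak{p},R)=\height(\mathfrak{p})$ instead of the hypothesis on $I$. The main technical point one must be careful about is the height-preservation statement for formal power series in (1), which is well known but a bit less elementary than its polynomial-ring counterpart; apart from this, both parts amount to a clean reduction to Theorem \ref{5}.
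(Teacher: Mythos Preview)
Your proposal is correct and follows essentially the same route as the paper: both reduce to Theorem \ref{5} and supply the missing grade equality via the sandwich $\grade(I,R)\le \grade(I',R')\le \height(I')=\height(I)=\grade(I,R)$, using flatness for the lower bound and the height-preservation identities $\height(I[|x|])=\height(I)$, $\height(\mathfrak{p}[x])=\height(\mathfrak{p})$ for the upper bound. The paper additionally notes that $R\to R[|x|]$ is faithfully flat (which underlies the height identity in the power series case), but otherwise your argument and the paper's coincide.
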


\begin{proof}
Since,$\phi: R\to R[|x|]$ is a faithfully flat ring map, see \cite[Exercise 10.12]{a}. By Theorem \ref{5}, the result is obvious in view of the following identities:
\[
\height(I)=\height(I[|x|]) \text{ and } \height(\mathfrak{p})=\height(\mathfrak{p}[x]),
\]
see \cite[(13.B)-Thoerem 19]{m} and \cite[Exercise 18]{g}
\end{proof}

Let $I$ be an ideal of $R$ contained in the Jacobson radical of $R$. Let $\hat{R}$ denote the $I$-adic completion of $R$. Then, \cite[Corollary 2.1.8]{h} and \cite[Corollary 2.4]{c}
state that $R$ is Cohen Macaulay ( resp. almost Cohen Macaulay) if and only if $\hat{R}$ is Cohen Macaulay ( resp. almost Cohen Macaulay).
\begin{theorem}
Assuming above notions, $R$ is $I$-Cohen Macaulay if and only if $\hat{R}$ is $I\hat{R}$-Cohen Macaulay and $\grade(I,R)=\grade(I\hat{R},\hat{R})$.
\end{theorem}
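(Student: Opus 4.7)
The plan is to mirror the sandwich argument of Theorem \ref{5}, with the $I$-adic completion map $\iota\colon R\to\hat R$ playing the role that $R\to R[|x|]$ played there. Three ingredients enter. First, since $I$ lies in the Jacobson radical, $\iota$ is faithfully flat (\cite[Exercise 10.12]{a}), hence by \cite[Proposition 1.1.2]{h} one has $\grade(I\hat R,\hat R)\ge\grade(I,R)$. Second, the standard identification $\hat R/I\hat R\cong R/I$ yields $\dim(\hat R/I\hat R)=\dim(R/I)$. Third, the Jacobson-radical hypothesis gives $\dim\hat R=\dim R$, in the spirit of the height identities invoked in Corollary \ref{5.1} (see \cite{m}).

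Granted these three facts, the forward implication follows from the chain
\[
\grade(I\hat R,\hat R)\ge\grade(I,R)=\dim R-\dim(R/I)=\dim\hat R-\dim(\hat R/I\hat R)\ge\grade(I\hat R,\hat R),
\]
in which the middle equality uses that $R$ is $I$-Cohen Macaulay and the final inequality is Equation (\ref{e4}) applied to $\hat R$. Collapsing the chain to equalities simultaneously delivers $\grade(I,R)=\grade(I\hat R,\hat R)$ and shows that $\hat R$ is $I\hat R$-Cohen Macaulay. For the converse, assuming $\hat R$ is $I\hat R$-Cohen Macaulay together with the grade equality, reading the identities in the opposite direction gives $\grade(I,R)+\dim(R/I)=\dim R$, so $R$ is $I$-Cohen Macaulay.

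The main obstacle is the third ingredient: faithful flatness alone provides only $\dim R\le\dim\hat R$, while the reverse inequality genuinely uses $I\subseteq\operatorname{Jac}(R)$. If no off-the-shelf reference is available in the non-local setting, I would establish $\dim\hat R\le\dim R$ by contracting chains of primes from $\hat R$ to $R$ via faithful flatness, exploiting that $\hat R/\mathfrak{p}\hat R$ is itself the $I$-adic completion of the Noetherian quotient $R/\mathfrak{p}$, and thereby reducing to the domain case. Once this dimension equality is in place, the remainder of the argument is simply a transcription of the sandwich used in Theorem \ref{5}.
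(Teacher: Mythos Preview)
Your proposal is correct and follows essentially the same route as the paper: the paper's proof simply says it proceeds along the lines of Theorem \ref{5}, invoking faithful flatness of $R\to\hat R$ (via \cite[Theorem 56]{m} rather than \cite[Exercise 10.12]{a}), together with $\dim\hat R=\dim R$ and $\hat R/I\hat R\cong R/I$ (both cited from \cite[p-175]{m}). The paper does not address the dimension-equality obstacle you raise; it treats it as a citable fact from Matsumura, so your extra caution there is unnecessary for matching the paper but is not incorrect.
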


\begin{proof} The proof goes on the same lines to the proof of Theorem \ref{5} with following information:
\[
\text{$R \to \hat{R} $ is a faithfully flat ring map, see \cite[Theorem 56]{m}. }
\]
\[
\text{$\dim(\hat{R})= \dim(R)$ and $\hat{R}/I\hat{R}\cong R/I$, see \cite[p-175]{m}.}
\]
\end{proof}

\end{document}